\documentclass[10pt]{amsart}
\usepackage{graphicx}
\usepackage{amsthm,hyperref}
\setlength{\textwidth}{12.5cm}
\setlength{\textheight}{18.5cm}
\setlength{\parindent}{16pt}
\setcounter{page}{1}
\newtheorem{theorem}{Theorem}[section]
\newtheorem{lemma}[theorem]{Lemma}
\theoremstyle{definition}
\newtheorem{definition}[theorem]{Definition}

\theoremstyle{remark}
\newtheorem{remark}[theorem]{Remark}
\numberwithin{equation}{section}

\begin{document}
\title[Dirichlet boundary value problem with discontinuous nonlinearity]{Dirichlet boundary value problem related to the $p(x)-$Laplacian with discontinuous nonlinearity}
\author{M. AIT HAMMOU}
\address[M. AIT HAMMOU]{Sidi Mohamed Ben Abdellah university, Laboratory LAMA, Department of Mathematics, Fez, Morocco}
\email[M. AIT HAMMOU]{mustapha.aithammou@usmba.ac.ma}
\subjclass[2010]{Set-valued operators, Nonlinear elliptic equation, $p(x)-$Laplaciane, Sobolev spaces with variable exponent, Degree theory.}
\keywords{47H04, 47H11, 47H30, 35D30, 35J66.}

\begin{abstract}
In this paper, we prove the existence of a weak solution for the Dirichlet boundary value problem related to the $p(x)-$Laplacian
$$
-\mbox{div}(|\nabla u|^{p(x)-2}\nabla u)+u\in -[\underline{g}(x,u),\overline{g}(x,u)],
$$
by using the degree theory after turning the problem into a Hammerstein equation. The right hand side $g$ is a possibly discontinuous function in the second variable satisfying some non-standard growth conditions.
\end{abstract}
\maketitle
\section{Introduction}
The Laplacian $p(x)$-Laplacian has been widely used in the modeling of several physical phenomena. we can refer to \cite{R} for its use for electrorheological fluids and to \cite{AMSo,CLR} for its use for image processing. Up to these days, many results have been achieved
obtained for solutions to equations related to this operator. \par
We consider  the following nonlinear elliptic boundary value problem
\begin{equation}\label{Pr1}
\left\{\begin{array}{lll}
-\Delta_{p(x)}u+u\in -[\underline{g}(x,u),\overline{g}(x,u)] & \mbox{in }\; \Omega,\\
u=0 &\mbox{on }\; \partial\Omega.
\end{array}\right.
\end{equation}
where $-\Delta_{p(x)}u:=-div(|\nabla u|^{p(x)-2}\nabla u)$ is the $p(x)-$Laplacian, $\Omega\subset \mathbb{R}^N$ is a bounded domain, $p(\cdot)$ is a log-H\"older continuous exponent and $g$ is a possibly discontinuous function in the second variable satisfying some non-standard growth conditions.\par
By using  the degree theory for $p(\cdot)\equiv p$ with values in $(2,N)$, Kim studied in (\cite{K}) this problem after developing a topological degree theory for a class of locally bounded weakly upper semicontinuous set-valued operators of generalized $(S_+)$ type in real reflexive separable Banach spaces, based on the Berkovits-Tienari degree \cite{BT}.\par
The aim of this paper is to prove the existence of at least one weak solution for \eqref{Pr1} using the topological degree theory after
turn the problem into a Hammerstein equation. The results in \cite{K} are thus extended into a larger functional framework, that of Sobolev spaces with variable exponents.\par
This paper is divided into four sections The second section is reserved for a mathematical preliminaries concerning some
class of locally bounded weakly upper semicontinuous set-valued operators of generalized $(S_+)$ type, the topological degree developing by Kim \cite{K} and some basic properties of generalized Lebesgue-Sobolev spaces $W_0^{1,p(x)}$. In the third, we find some assumptions and technical lemmas. The Fourth section is reserved to state and prove the existence results of weak solutions of problem \eqref{Pr1}.
\section{Mathematical preliminaries}
\subsection{Some classes of operators and topological degree}
Let $X$ and $Y$ be two real Banach spaces and $\Omega$ a nonempty subset of $X$. A set-valued operator\\
$F:\Omega\subset X \rightarrow 2^Y$ is said to be
\begin{itemize}
  \item {\it bounded}, if it takes any bounded set into a bounded set;
  \item {\it upper semicontinuous (u.s.c)}, if the set $F^{-1}(A)=\{u\in\Omega/ Fu\cap A\neq\emptyset\}$ is closed in $X$ for each closed set $A$ in $Y$;
  \item {\it weakly upper semicontinuous (w.u.s.c)}, if $F^{-1}(A)$ is closed in $X$ for each weakly closed set $A$ in $Y$;
  \item {\it compact}, if it is u.s.c and the image of any bounded set is relatively compact.
  \item {\it locally bounded}, if for each $u\in \Omega$ there exists a neighborhood $\mathcal{U}$ of $u$ such that the set $F(\mathcal{U})=\bigcup_{u\in\mathcal{U}}Fu$ is bounded.
\end{itemize}

Let $X$ be a real reflexive Banach space with dual $X^*$. A set-valued operator\\
$F:\Omega\subset X\rightarrow 2^{X^*}\backslash\emptyset$ is said to be
\begin{itemize}
  \item {\it of class $(S_+)$}, if for any sequence $(u_n)$ in $\Omega$ and any sequence $(w_n)$ in $X^*$ with $w_n\in Fu_n$ such that $u_n \rightharpoonup u$ in $X$ and $limsup\langle w_n,u_n - u\rangle\leq 0 $, it follows that $u_n \rightarrow u$ in $X$;
  \item {\it quasimonotone}, if for any sequence $(u_n)$ in $\Omega$ and any sequence $(w_n)$ in $X^*$ with $w_n\in Fu_n$ such that $u_n \rightharpoonup u$ in $X$, it follows that $$liminf\langle w_n,u_n - u\rangle\geq 0.$$
\end{itemize}

Let $T:\Omega_1\subset X\rightarrow X^*$ be a bounded operator such that $\Omega\subset\Omega_1$. A set-valued operatopr $F:\Omega\subset X\rightarrow 2^X\backslash\emptyset$ is said to of {\it class $(S_+)_T$}, if for any sequence $(u_n)$ in $\Omega$ and any sequence $(w_n)$ in $X$ with $w_n\in Fu_n$ such that $u_n \rightharpoonup u$ in $X$, $y_n:=Tu_n\rightharpoonup y$ in $X^*$ and $limsup\langle w_n,y_n-y\rangle\leq 0$, we have $u_n\rightarrow u$ in $X$.

For any set $\Omega\subset X$ with $\Omega\subset D_F$, where $D_F$ denotes the domain of $F$, and any bounded operator $T:\Omega\rightarrow X^*$, we consider the following classes of operators:
\begin{eqnarray*}
  \mathcal{F}_1(\Omega) &:=& \{F:\Omega\rightarrow X^*\mid F \mbox{ is bounded, continuous and of class }(S_+)\}, \\
  \mathcal{F}_T(\Omega) &:=& \{F:\Omega\rightarrow 2^X\mid F \mbox{ is locally bounded, w.u.s.c and of class }(S_+)_T\}.
\end{eqnarray*}

Let $\mathcal{O}$ be the collection of all bounded open set in $X$. Define
  $$\mathcal{F}(X) := \{F \in \mathcal{F}_T(\bar{G})\mid G\in\mathcal{O}, T\in \mathcal{F}_1(\bar{G})\},$$
Here, $T\in \mathcal{F}_1(\bar{G})$ is called an \it essential inner map \rm to $F$.
 \begin{lemma}\label{l2.1}\cite[Lemma 1.4]{K}
 Let $G$ be a bounded open set in a real reflexive Banach space $X$. Suppose that $T\in \mathcal{F}_1(\bar{G})$ and $S:D_S\subset X^*\rightarrow 2^X$ is locally bounded and w.u.s.c such that $T(\bar{G})\subset D_S$. Then the following statements hold:
 \begin{enumerate}
   \item If $S$ is quasimonotone, then $I+ST\in\mathcal{F}_T(\bar{G})$, where $I$ denotes the identity operator.
   \item If $S$ is of class $(S_+)$, then $ST\in\mathcal{F}_T(\bar{G})$.
 \end{enumerate}
 \end{lemma}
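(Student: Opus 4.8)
The plan is to show, in each of the two cases, that the composite operator lies in $\mathcal{F}_T(\bar G)$ by verifying its three defining properties: local boundedness, weak upper semicontinuity, and membership in the class $(S_+)_T$. Throughout I abbreviate $y_n:=Tu_n$ and use that $T\in\mathcal{F}_1(\bar G)$ is bounded, continuous and of class $(S_+)$, that $T(\bar G)\subset D_S$ is bounded, and that $X$ is reflexive. Local boundedness is quick: given $u\in\bar G$, local boundedness of $S$ at $Tu$ furnishes a neighbourhood $V$ of $Tu$ with $S(V)$ bounded, and then $T^{-1}(V)$ is a neighbourhood of $u$ with $ST\bigl(T^{-1}(V)\bigr)\subset S(V)$ bounded; adding the identity preserves this, so both $ST$ and $I+ST$ are locally bounded.

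For weak upper semicontinuity in case (2) I would simply observe that $(ST)^{-1}(A)=T^{-1}\bigl(S^{-1}(A)\bigr)$ for every $A\subset X$; if $A$ is weakly closed then $S^{-1}(A)$ is closed because $S$ is w.u.s.c., and $T^{-1}\bigl(S^{-1}(A)\bigr)$ is then closed by continuity of $T$. In case (1) that factorisation is unavailable, so I would argue sequentially: if $u_n\to u$ in $\bar G$ and $u_n+w_n\in A$ with $w_n\in S(y_n)$ and $A$ weakly closed, then $y_n\to Tu$ strongly by continuity of $T$, the sequence $(w_n)$ is eventually bounded by local boundedness of $S$ near $Tu$, so after extracting a subsequence $w_n\rightharpoonup w$ in the reflexive space $X$; hence $u_n+w_n\rightharpoonup u+w$ and $u+w\in A$. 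It then remains to check $w\in S(Tu)$, for which I would invoke the demiclosedness property of a locally bounded w.u.s.c. map with weakly closed (closed convex) values: if $w\notin S(Tu)$, strictly separate $w$ from $S(Tu)$ by some $\varphi\in X^*$ and $\alpha$, set $V:=\{z\in X:\langle\varphi,z\rangle<\alpha\}$, use the ``upper'' reformulation of w.u.s.c. (that $\{y:Sy\subset V\}$ is open for weakly open $V$) together with $y_n\to Tu$ to get $w_n\in V$ for large $n$, and pass to the limit for a contradiction. Thus $u+w\in(I+ST)u\cap A$ and $(I+ST)^{-1}(A)$ is closed.

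To see that the operators are of class $(S_+)_T$, let $u_n\rightharpoonup u$ in $X$ with $y_n=Tu_n\rightharpoonup y$ in $X^*$. In case (2) take $w_n\in S(y_n)$ with $\limsup\langle w_n,y_n-y\rangle\le0$; since $S$ is of class $(S_+)$ on $X^*$ this gives $y_n\to y$ strongly in $X^*$, and then $\langle Tu_n,u_n-u\rangle=\langle y_n-y,u_n-u\rangle+\langle y,u_n-u\rangle\to0$, so the $(S_+)$ property of $T$ yields $u_n\to u$. In case (1) take $z_n=u_n+w_n\in(I+ST)u_n$ with $\limsup\langle z_n,y_n-y\rangle\le0$; quasimonotonicity of $S$ gives $\liminf\langle w_n,y_n-y\rangle\ge0$, whence $\limsup\langle u_n,y_n-y\rangle\le\limsup\langle z_n,y_n-y\rangle-\liminf\langle w_n,y_n-y\rangle\le0$; and since $\langle Tu_n,u_n-u\rangle-\langle u_n,y_n-y\rangle=-\langle y_n-y,u\rangle+\langle y,u_n-u\rangle\to0$, we also get $\limsup\langle Tu_n,u_n-u\rangle\le0$, so again the $(S_+)$ property of $T$ forces $u_n\to u$.

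The genuinely substantive step is the demiclosedness argument inside the proof of weak upper semicontinuity of $I+ST$ — that $y_n\to Tu$ strongly together with $w_n\in S(y_n)$ and $w_n\rightharpoonup w$ forces $w\in S(Tu)$ — since this is where reflexivity, local boundedness and weak upper semicontinuity of $S$ must be combined (and where one really needs the values of $S$ to be weakly closed). Everything else — the local boundedness of the composite and the $\limsup/\liminf$ bookkeeping for the $(S_+)_T$ property — is routine, provided one keeps careful track of which pairings live in $X\times X^*$ and which in $X^*\times X$.
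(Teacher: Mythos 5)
The paper itself offers no proof of this lemma: it is imported verbatim from Kim \cite[Lemma 1.4]{K}, so there is no in-paper argument to measure yours against. Judged on its own terms, your reconstruction is the standard (and almost certainly the intended) proof, and the computations check out: local boundedness of $ST$ and $I+ST$ via local boundedness of $S$ at $Tu$ and continuity of $T$; weak upper semicontinuity of $ST$ via the factorisation $(ST)^{-1}(A)=T^{-1}\bigl(S^{-1}(A)\bigr)$; and the $(S_+)_T$ verifications are correct --- in case (2) the $(S_+)$ property of $S$ gives $y_n\to y$ in $X^*$, hence $\langle Tu_n,u_n-u\rangle\to 0$ and the $(S_+)$ property of $T$ applies, while in case (1) quasimonotonicity transfers the $\limsup$ condition from $z_n=u_n+w_n$ to $u_n$, and your identity $\langle Tu_n,u_n-u\rangle-\langle u_n,y_n-y\rangle=\langle y,u_n-u\rangle-\langle y_n-y,u\rangle\to 0$ closes the argument.

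The one point worth stressing is the caveat you yourself flagged: the demiclosedness step in the w.u.s.c.\ proof for $I+ST$ (that $y_n\to Tu$, $w_n\in S(y_n)$, $w_n\rightharpoonup w$ force $w\in S(Tu)$) genuinely needs the values of $S$ to be weakly closed (e.g.\ closed and convex), and this hypothesis is absent from the statement as quoted in the present paper. It is not removable: take $X=\mathbb{R}$, $T$ the identity on $\bar G=[-2,2]$, and $Sy=(0,1)$ for all $y$; this $S$ is locally bounded, w.u.s.c.\ and quasimonotone, yet $(I+ST)^{-1}(\{0\})=(-1,0)$ is not closed, so $I+ST\notin\mathcal{F}_T(\bar G)$. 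The lemma is thus true only under the standing convention of \cite{K} (inherited from the Berkovits--Tienari framework \cite{BT}) that the multis have nonempty closed convex values --- a convention the present paper suppresses but which holds for the interval-valued operator $N$ to which the lemma is applied. Under that convention your separation argument is fine; with merely weakly closed, non-convex values one could instead separate the point from $S(Tu)$ by a weakly continuous function, the weak topology being completely regular. So: a correct proof, with a hidden hypothesis you correctly identified rather than overlooked.
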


 \begin{definition}
   Let $G$ be a bounded open subset of a real reflexive Banach space $X$, $T:\bar{G}\rightarrow X^*$ be bounded and continuous and let $F$ and $S$ be bounded and of class $(S_+)_T$. The affine homotopy $H:[0,1]\times\bar{G}\rightarrow 2^X$ defined by
   $$H(t,u):=(1-t)Fu+tSu \mbox{ for } (t,u)\in[0,1]\times\bar{G}$$
   is called an affine homotopy with the common essential inner map $T$.
 \end{definition}

 \begin{remark}\cite[Lemma 1.6]{K}
    The above affine homotopy satisfies condition $(S_+)_T$.
 \end{remark}
As in\cite{K}, we introduce a suitable topological degree for the class $\mathcal{F}(X)$:
 \begin{theorem}\label{t2.1}\cite[Definition 2.9 and Theorem 2.10]{K}
   Let $$\mathcal{M} =\{(F,G,h)|G\in \mathcal{O}, T\in\mathcal{F}_1(\bar{G}), F\in\mathcal{F}_T(\bar{G}), h\notin F(\partial G)\}.$$ There exists a unique degree
   function $d:\mathcal{M}\rightarrow \mathbb{Z}$  that satisfies the following properties:
   \begin{enumerate}
               \item (Existence) if $d(F,G,h)\neq 0$ , then the inclusion $h\in Fu$ has a solution in $G$,
               \item (Additivity) If $G_1$ and $G_2$ are two disjoint open subset of $G$ such that\\
               $h\not\in F(\bar{G}\setminus (G_1\cup G_2))$, then we have $$d(F,G,h)=d(F,G_1,h)+d(F,G_2,h),$$
               \item (Homotopy invariance) Suppose that $H: [0,1]\times \bar{G}\rightarrow X$ is a locally bounded w.u.s.c affine homotopy of class $(S_+)_T$ with the common essential inner map $T$. If $h:[0,1]\rightarrow X$ is a continuous curve in $X$ such that $h(t)\notin H(t,\partial G)$ for all $t\in [0,1]$, then the value of $d(H(t,.),G,h(t))$ is constant for all $t\in[0,1]$,
               \item (Normalization) For any $h\in G$, we have $$d(I,G,h)=1.$$
             \end{enumerate}
 \end{theorem}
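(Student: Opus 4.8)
This statement is \cite[Definition~2.9 and Theorem~2.10]{K}; the plan below is how I would prove it, following the Browder--Skrypnik--Berkovits programme of building a degree by finite-dimensional approximation. The plan is to construct $d$ in three layers. First, take as given the Brouwer degree in finite dimensions. Second, recall (or rebuild by a Galerkin scheme over an increasing sequence of finite-dimensional subspaces, using that $X$ is reflexive and, as in \cite{K}, separable) the topological degree $d_{(S_+)}$ for bounded, locally bounded, w.u.s.c.\ multivalued maps of class $(S_+)$ on bounded open subsets of $X$ into $2^{X^*}$; this is the multivalued Browder--Skrypnik degree of Berkovits--Tienari \cite{BT}, and it already enjoys the analogues of (1)--(4). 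Third --- the only genuinely new layer --- reduce the class $\mathcal{F}(X)$ to the second layer: given $(F,G,h)\in\mathcal{M}$ with essential inner map $T\in\mathcal{F}_1(\bar{G})$, use that $T$ is single-valued, continuous, bounded and of class $(S_+)$ to associate to $F$ an admissible operator of class $(S_+)$ (a Hammerstein-type device on $X$, or on the product $X\times X^*$, depending on the presentation) and define $d(F,G,h)$ to be the $(S_+)$-degree of that operator. The membership $F\in\mathcal{F}_T(\bar{G})$ is exactly what makes the associated operator admissible for $d_{(S_+)}$.

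The main obstacle is well-definedness: one must show that $d(F,G,h)$ is independent of the Galerkin scheme and of the approximation parameters used in layer two, and independent of the particular essential inner map (an $F$ can belong to $\mathcal{F}_{T_1}(\bar{G})\cap\mathcal{F}_{T_2}(\bar{G})$). The whole argument is powered by one compactness lemma: given approximate solutions $u_n\in\bar{G}$ with selections $w_n\in Fu_n$ satisfying the finite-dimensional relations, one passes to a subsequence with $u_n\rightharpoonup u$ in $X$ (as $G$ is bounded and $X$ reflexive) and $Tu_n\rightharpoonup y$ in $X^*$ (as $T$ is bounded); the approximate relations together with continuity of $T$ force $\limsup\langle w_n,Tu_n-y\rangle\le 0$, and the class $(S_+)_T$ then upgrades this to $u_n\to u$ strongly; local boundedness of $F$ and its w.u.s.c.\ with closed convex (hence weakly closed) values give $h\in Fu$, and $h\notin F(\partial G)$ forces $u\in G$. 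This is what makes the finite-dimensional degrees stabilize and the limit independent of every choice.

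Granting the construction, properties (1)--(4) are inherited from the corresponding properties of the Brouwer and $(S_+)$ degrees together with this compactness. For (Normalization), Lemma~\ref{l2.1}(1) with $S=0$ gives $I\in\mathcal{F}_T(\bar{G})$ for any admissible $T$, the finite-dimensional restriction of $I$ is an identity map, and the Brouwer degree of an identity at an interior point is $1$. For (Existence), $d(F,G,h)\neq 0$ makes the approximating degrees nonzero, so approximate solutions exist and the compactness lemma yields a genuine $u\in G$ with $h\in Fu$. For (Additivity), one uses additivity and excision of the Brouwer/$(S_+)$ degree after noting, again via the compactness lemma, that $h\notin F(\bar{G}\setminus(G_1\cup G_2))$ excludes approximate solutions outside $G_1\cup G_2$ at every sufficiently fine level. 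For (Homotopy invariance), the Remark after the definition of affine homotopy keeps $H(t,\cdot)$ in class $(S_+)_T$; if for some $t_n\to t$ there were approximate solutions $u_n\in\partial G$ of $h(t_n)\in H(t_n,u_n)$, the compactness lemma in the variables $(t,u)$ would produce $u\in\partial G$ with $h(t)\in H(t,u)$, against the hypothesis; hence $h(t)$ avoids $H(t,\cdot)$ on $\partial G$ uniformly at each fine level, Brouwer homotopy invariance makes the approximating degrees constant in $t$, and passing to the limit gives the claim.

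Finally, uniqueness. If $d'$ is another function on $\mathcal{M}$ satisfying (1)--(4), I would show $d'=d$ by the standard reduction. An approximation lemma of the same flavour as above joins every admissible $F$ to a map with finite-dimensional range through an admissible homotopy of class $(S_+)_T$ missing $h$ on $\partial G$; by (Homotopy invariance), $d'$ is then determined by its values on finite-dimensional maps, and on those (Additivity) and excision localise the problem to small balls inside a finite-dimensional subspace, where (Normalization) and homotopy invariance force the value to equal the Brouwer degree. The identical chain of reductions applies to the degree $d$ built above, so $d'$ and $d$ coincide on $\mathcal{M}$, which proves uniqueness.
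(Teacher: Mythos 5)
The paper itself offers no proof of this statement: it is imported verbatim from Kim \cite{K} (Definition 2.9 and Theorem 2.10), whose construction indeed proceeds exactly along the programme you describe --- Brouwer degree, then the Berkovits--Tienari degree \cite{BT} for locally bounded w.u.s.c.\ multis of class $(S_+)$ built by Galerkin approximation, then the extension to the class $(S_+)_T$ through the essential inner map, with well-definedness, the four axioms and uniqueness obtained from the $(S_+)_T$ compactness argument and reduction to the Brouwer degree. Your outline therefore takes essentially the same route as the cited source (it is a plan rather than a complete proof, but the delicate points you flag --- independence of the approximation scheme and of the essential inner map --- are precisely the ones treated in \cite{K}).
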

\subsection{The spaces $W_0^{1,p(x)}(\Omega)$}

We introduce the setting of our problem with some auxiliary results of the variable exponent Lebesgue and Sobolev spaces $L^{p(x)}(\Omega)$ and $W_0^{1,p(x)}(\Omega)$. For convenience, we only recall some basic facts with will be used later, we refer to \cite{FZ1,KR,ZQF} for more details.\\
Let $\Omega$ be an open bounded subset of $\mathbb{R}^N$, $N\geq2,$ with a Lipschitz boundary denoted by $\partial\Omega$. Denote $$C_+(\bar{\Omega})=\{h\in C(\bar{\Omega})|\inf_{x\in\bar{\Omega}}h(x)>1\}.$$
For any $h\in C_+(\bar{\Omega})$, we define $$h^+:=max\{h(x),x\in\bar{\Omega}\}, h^-:=min\{h(x),x\in\bar{\Omega}\}.$$
For any $p\in C_+(\bar{\Omega})$ we define the variable exponent Lebesgue space
$$L^{p(x)}(\Omega)=\{u;\ u:\Omega \rightarrow \mathbb{R} \mbox{  is measurable and }
\int_\Omega| u(x)|^{p(x)}\;dx<+\infty \}$$ endowed with {\it Luxemburg norm}
 $$\|u\|_{p(x)}=\inf\{{\lambda>0}/\rho_{p(x)}(\frac{u}{\lambda})\leq 1\}.$$ where $$\rho_{p(x)}(u)=\int_\Omega|u(x)|^{p(x)}\;dx, \;\;\; \forall u\in L^{p(x)}(\Omega). $$
$(L^{p(x)}(\Omega),\|\cdot\|_{p(x)})$ is a Banach space \cite[Theorem 2.5]{KR}, separable and reflexive \cite[Corollary 2.7]{KR}. Its conjugate space is $L^{p'(x)}(\Omega)$ where $\frac{1}{p(x)}+\frac{1}{p'(x})=1$ for all $x\in \Omega.$ For any $u\in L^{p(x)}(\Omega)$ and $v\in L^{p'(x)}(\Omega)$, H\"older inequality holds \cite[Theorem 2.1]{KR}
\begin{equation}\label{hol}
  \left|\int_\Omega uv\;dx\right|\leq\left(\frac{1}{p^-}+\frac{1}{{p^{'}}^-}\right)\|u\|_{p(x)}\|v\|_{p'(x)}\leq 2 \|u\|_{p(x)}\|v\|_{p'(x)}.
\end{equation}
 Notice that if $u\in L^{p(.)}(\Omega)$  then the following relations hold true (see \cite{FZ1})
\begin{equation}\label{imp0}
  \|u\|_{p(x)}<1(=1;>1)\;\;\;\Leftrightarrow \;\;\;\rho_{p(x)}(u)<1(=1;>1),
\end{equation}
\begin{equation}\label{imp1}
  \|u\|_{p(x)}>1\;\;\;\Rightarrow\;\;\;\|u\|_{p(x)}^{p^-}\leq\rho_{p(x)}(u)\leq\|u\|_{p(x)}^{p^+},
\end{equation}
\begin{equation}\label{imp2}
\|u\|_{p(x)}<1\;\;\;\Rightarrow\;\;\;\|u\|_{p(x)}^{p^+}\leq\rho_{p(x)}(u)\leq\|u\|_{p(x)}^{p^-},
\end{equation}
From (\ref{imp1}) and (\ref{imp2}), we can deduce the inequalities
\begin{equation}\label{e1}
  \|u\|_{p(x)}\leq \rho_{p(x)}(u)+1,
\end{equation}
\begin{equation}\label{ineq1}
  \rho_{p(x)}(u)\leq\|u\|_{p(x)}^{p^-}+\|u\|
  _{p(x)}^{p^+}.
\end{equation}
 If $p_1,p_2\in C_+(\bar{\Omega}), p_1(x) \leq p_2(x)$ for any $x\in\bar{\Omega},$ then there exists the continuous embedding
$L^{p_2(x)}(\Omega)\hookrightarrow L^{p_1(x)}(\Omega).$\\
Next, we define the variable exponent Sobolev space $W^{1,p(x)}(\Omega)$ as
 $$W^{1,p(x)}(\Omega)=\{u\in L^{p(x)}(\Omega)/|\nabla u | \in L^{p(x)}(\Omega)\}.$$ It is a Banach space under the norm
$$||u||=\|u\|_{p(x)}+\|\nabla u\|_{p(x)}.$$ We also define $W_0^{1,p(.)}(\Omega)$ as the subspace of $W^{1,p(.)}(\Omega)$ which is the closure of $C_0^\infty(\Omega)$ with respect to the norm $||\;.\;||$. If the exponent $p(.)$ satisfies the log-H\"older continuity condition, i.e. there is a constant $\alpha>0$ such that for every $x,y\in\Omega, x\neq y$ with $|x-y|\leq\frac{1}{2}$ one has
\begin{equation}\label{e4}
  |p(x)-p(y)|\leq\frac{\alpha}{-\log|x-y|}\;\;,
\end{equation}
then we have the Poincar\'e inequality (see \cite{HHKV,SD}), i.e. the exists a constant $C>0$ depending only on $\Omega$ and the function $p$ such that
\begin{equation}\label{ptcar}
  \|u\|_{p(x)}\leq C\|\nabla u\|_{p(x)} , \forall u\in W_0^{1,p(.)}(\Omega).
\end{equation}
In particular, the space $W_0^{1,p(.)}(\Omega)$ has a norm $\|\cdot\|$ given by
$$\|u\|_{1,p(x)}=\|\nabla u\|_{p(.)} \mbox{ for all } u\in W_0^{1,p(x)}(\Omega),$$ which is equivalent to $ \|\cdot\|.$  In addition, we have the compact embedding\\ $W_0^{1,p(.)}(\Omega)\hookrightarrow L^{p(.)}(\Omega)$(see \cite{KR}).
The space $(W_0^{1,p(x)}(\Omega),\|\cdot\|_{1,p(x)})$ is a Banach space, separable and reflexive (see \cite{FZ1,KR}). The dual space of $W_0^{1,p(x)}(\Omega),$ denoted  $W^{-1,p'(x)}(\Omega),$ is equipped with the norm
$$\|v\|_{-1,p'(x)}=\inf\{\|v_0\|_{p'(x)}+\sum_{i=1}^{N}\|v_i\|_{p'(x)}\},$$ where the infinimum is taken on all possible decompositions $v=v_0-div F$ with $v_0\in L^{p'(x)}(\Omega)$ and $F=(v_1,...,v_N)\in (L^{p'(x)}(\Omega))^N.$

\section{Basic assumptions and technical Lemmas}

In this section, we study the Dirichlet boundary value problem (\ref{Pr1}) with discontinuous nonlinearity, based on the degree theory in Section 2, where $\Omega\subset \mathbb{R}^N$, $N\geq2,$  is a bounded domain with a Lipschitz boundary $\partial\Omega$,  $p\in C_+(\bar{\Omega})$ satisfy the log-H\"older continuity condition (\ref{e4}), $2\leq p^-\leq p(x)\leq p^+<\infty$ and $g:\Omega\times\mathbb{R}\rightarrow \mathbb{R}$ is a possibly discontinuous real-valued function in the sense that
$$\underline{g}(x,s)=\liminf_{\eta\rightarrow s}g(x,\eta)=\lim_{\delta\rightarrow 0^+}\inf_{|\eta-s|<\delta}g(x,\eta),$$
$$\overline{g}(x,s)=\limsup_{\eta\rightarrow s}g(x,\eta)=\lim_{\delta\rightarrow 0^+}\sup_{|\eta-s|<\delta}g(x,\eta).$$
Suppose that
\begin{description}
  \item[($g_1$)] $\overline{g}$ and $\underline{g}$ are superpositionally measurable, that is, $\overline{g}(\cdot,u(\cdot))$ and $\underline{g}(\cdot,u(\cdot))$ are measurable on $\Omega$ for any measurable function $u:\Omega\rightarrow\mathbb{R};$
  \item[($g_2$)] $g$ satisfies the growth condition $$|g(x,s)|\leq k(x)+c|s|^{q(x)-1}$$ for a.e. $x\in\Omega$ and all $s\in\mathbb{R}$, where $c$ is a positive constant, $k\in L^{p'(x)}(\Omega)$ and $p\in C_+(\bar{\Omega})$ with $q^+ < p^-.$
\end{description}

\begin{lemma}\cite[Proposition 1]{C}\label{usc}
For each fixed $x\in \Omega$, the functions $\overline{g}(x,s)$ and $\underline{g}(x,s)$ are u.s.c functions on $\mathbb{R}^N$.
\end{lemma}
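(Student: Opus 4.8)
The plan is to fix $x\in\Omega$ and to verify upper semicontinuity of each of the two one–variable functions $s\mapsto\overline{g}(x,s)$ and $s\mapsto\underline{g}(x,s)$ through the closed–superlevel–set criterion: a function $h:\mathbb{R}\to\mathbb{R}$ is u.s.c. precisely when the set $\{s\in\mathbb{R}: h(s)\ge\lambda\}$ is closed for every $\lambda\in\mathbb{R}$, equivalently when $\limsup_{s\to s_0}h(s)\le h(s_0)$ at each $s_0$. Before running this criterion I would record that the growth bound $(g_2)$, namely $|g(x,s)|\le k(x)+c|s|^{q(x)-1}$, makes $g(x,\cdot)$ locally bounded on $\mathbb{R}$, so that the envelopes $\overline{g}(x,s)=\inf_{\delta>0}\sup_{|\eta-s|<\delta}g(x,\eta)$ and $\underline{g}(x,s)=\sup_{\delta>0}\inf_{|\eta-s|<\delta}g(x,\eta)$ are finite and the monotone limits in $\delta$ are legitimate; this is what allows me to manipulate the $\limsup$ and $\liminf$ freely.

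For $\overline{g}(x,\cdot)$ I would argue directly from its representation as a $\delta$–infimum of suprema over shrinking balls. Fix $\lambda$ and take $s_n\to s_0$ with $\overline{g}(x,s_n)\ge\lambda$. By definition of $\overline{g}(x,s_n)$ as a $\limsup$, for each $n$ I can select $\eta_n$ with $|\eta_n-s_n|<1/n$ and $g(x,\eta_n)>\lambda-1/n$; then $\eta_n\to s_0$, so $\overline{g}(x,s_0)=\limsup_{\eta\to s_0}g(x,\eta)\ge\limsup_n g(x,\eta_n)\ge\lambda$. Hence $\{s:\overline{g}(x,s)\ge\lambda\}$ is closed and $\overline{g}(x,\cdot)$ is u.s.c. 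The decisive structural fact is the monotonicity under shrinking neighborhoods: for $|s-s_0|<\delta/2$ one has $\{|\eta-s|<\delta/2\}\subset\{|\eta-s_0|<\delta\}$, which propagates the supremum bound to a whole neighborhood of $s_0$.

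For $\underline{g}(x,\cdot)$ I would attempt the same closed–superlevel–set scheme from the representation $\underline{g}(x,s)=\sup_{\delta>0}\inf_{|\eta-s|<\delta}g(x,\eta)$, aiming at the upper semicontinuity claimed in the statement. The delicate point — and what I expect to be the main obstacle — is that infima behave oppositely to suprema under restriction: passing to a smaller neighborhood can only raise an infimum, so the nesting $\{|\eta-s|<\delta/2\}\subset\{|\eta-s_0|<\delta\}$ now propagates bounds in the reverse direction compared with the $\overline{g}$ case. Handling this reversed monotonicity carefully, together with invoking $(g_2)$ to exclude infinite values that would corrupt the comparison of infima, is where the real work lies; once it is carried out the superlevel sets $\{s:\underline{g}(x,s)\ge\lambda\}$ come out closed, and the u.s.c. conclusion of the statement follows for both envelopes by the same argument.
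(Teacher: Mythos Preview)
The paper does not supply its own proof; the lemma is quoted from Chang~\cite{C}. Your argument for $\overline{g}(x,\cdot)$ is correct and is the standard proof that a $\limsup$-envelope is upper semicontinuous.

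The trouble you sense with $\underline{g}(x,\cdot)$ is not a technical obstacle to be finessed: the printed statement is in error for that half. The lower envelope $\underline{g}(x,s)=\sup_{\delta>0}\inf_{|\eta-s|<\delta}g(x,\eta)$ is \emph{lower} semicontinuous, and in general fails to be u.s.c. For instance, with $g(x,\eta)=1$ for $\eta>0$ and $g(x,\eta)=0$ for $\eta\le 0$ one finds $\underline{g}(x,s)=1$ for $s>0$ and $\underline{g}(x,s)=0$ for $s\le 0$, so $\limsup_{s\to 0}\underline{g}(x,s)=1>0=\underline{g}(x,0)$. The ``reversed monotonicity'' you noticed is precisely the mechanism that yields l.s.c.\ rather than u.s.c., and no amount of care with $(g_2)$ will close the gap because the claim itself is false. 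What Chang's Proposition~1 actually asserts, and what the paper actually uses in the proof of Lemma~\ref{N}, is that $\overline{g}(x,\cdot)$ is u.s.c.\ and $\underline{g}(x,\cdot)$ is l.s.c.: the inclusion $[\underline{g}(x,t),\overline{g}(x,t)]\subset(\underline{g}(x,u_0(x))-\varepsilon/R,\,\overline{g}(x,u_0(x))+\varepsilon/R)$ for $t$ near $u_0(x)$ requires u.s.c.\ of $\overline{g}$ at the right endpoint and l.s.c.\ of $\underline{g}$ at the left. Your same nesting observation, read in the correct direction, proves this l.s.c.\ immediately.
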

\begin{lemma}\cite[Theorem 3.1]{Ch}\label{L}
  The operator $L:W_0^{1,p(x)}(\Omega)\rightarrow W^{-1,p'(x)}(\Omega)$ setting by
  $$\langle Lu,v\rangle=\int_\Omega|\nabla u|^{p(x)-2}\nabla u\nabla v dx, \mbox{ for all } u,v\in W_0^{1,p(x)}(\Omega)$$
  is continuous, bounded and strictly monotone. It is also a homeomorphism mapping of class $(S_+)$.
\end{lemma}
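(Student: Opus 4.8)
The plan is to verify the listed properties one at a time, relying on the modular--norm relations \eqref{imp0}--\eqref{ineq1}, the H\"older inequality \eqref{hol}, the continuity of Nemytskii operators between variable-exponent spaces, and the classical pointwise inequalities for the vector field $\xi\mapsto|\xi|^{p-2}\xi$ on $\mathbb{R}^N$.

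First, for well-definedness and boundedness, I would note that for $u\in W_0^{1,p(x)}(\Omega)$ the field $|\nabla u|^{p(x)-2}\nabla u$ lies in $(L^{p'(x)}(\Omega))^N$, since $(p(x)-1)p'(x)=p(x)$ forces $\rho_{p'(x)}\bigl(|\nabla u|^{p(x)-1}\bigr)=\rho_{p(x)}(\nabla u)$; then \eqref{hol} gives $|\langle Lu,v\rangle|\le 2\,\bigl\||\nabla u|^{p(x)-1}\bigr\|_{p'(x)}\|\nabla v\|_{p(x)}$, and \eqref{e1}--\eqref{ineq1} bound $\bigl\||\nabla u|^{p(x)-1}\bigr\|_{p'(x)}$ by a function of $\|u\|_{1,p(x)}$, so $Lu\in W^{-1,p'(x)}(\Omega)$ and $L$ is bounded. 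For continuity I would take $u_n\to u$ in $W_0^{1,p(x)}(\Omega)$, hence $\nabla u_n\to\nabla u$ in $(L^{p(x)}(\Omega))^N$, and invoke the continuity of the Nemytskii operator attached to the Carath\'eodory map $(x,\xi)\mapsto|\xi|^{p(x)-2}\xi$ from $(L^{p(x)}(\Omega))^N$ into $(L^{p'(x)}(\Omega))^N$ (extract a subsequence converging a.e.\ and use a Vitali/dominated-convergence argument), obtaining $\|Lu_n-Lu\|_{-1,p'(x)}\to0$.

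For strict monotonicity I would use that $\xi\mapsto\frac1p|\xi|^p$ is strictly convex, so $(|\xi|^{p(x)-2}\xi-|\eta|^{p(x)-2}\eta)\cdot(\xi-\eta)\ge0$ pointwise with equality only if $\xi=\eta$; integrating yields $\langle Lu-Lv,u-v\rangle\ge0$, and equality forces $\nabla u=\nabla v$ a.e., hence $u=v$ since both vanish on $\partial\Omega$. For the $(S_+)$ property, assume $u_n\rightharpoonup u$ in $W_0^{1,p(x)}(\Omega)$ and $\limsup_n\langle Lu_n,u_n-u\rangle\le0$; since $\langle Lu,u_n-u\rangle\to0$, monotonicity gives $\langle Lu_n-Lu,u_n-u\rangle\to0$. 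Here I would exploit $p^-\ge2$ and the elementary inequality $(|\xi|^{p-2}\xi-|\eta|^{p-2}\eta)\cdot(\xi-\eta)\ge c\,|\xi-\eta|^{p}$, valid for $p\ge2$ with a constant $c=c(p^-,p^+)>0$, to conclude $\int_\Omega|\nabla u_n-\nabla u|^{p(x)}\,dx\to0$, i.e.\ $\rho_{p(x)}(\nabla u_n-\nabla u)\to0$; by \eqref{imp0}--\eqref{imp2} this is equivalent to $\|\nabla u_n-\nabla u\|_{p(x)}\to0$, so $u_n\to u$ strongly in $W_0^{1,p(x)}(\Omega)$.

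Finally, for the homeomorphism assertion: $L$ is injective by strict monotonicity and coercive because $\langle Lu,u\rangle=\rho_{p(x)}(\nabla u)\ge\|u\|_{1,p(x)}^{p^-}$ once $\|u\|_{1,p(x)}\ge1$ by \eqref{imp1}, so $\langle Lu,u\rangle/\|u\|_{1,p(x)}\to\infty$; being monotone, continuous and coercive on the reflexive space $W_0^{1,p(x)}(\Omega)$, it is onto $W^{-1,p'(x)}(\Omega)$ by the Minty--Browder theorem, hence bijective. To see that $L^{-1}$ is continuous I would take $f_n\to f$ in $W^{-1,p'(x)}(\Omega)$, put $u_n=L^{-1}f_n$ and $u=L^{-1}f$; coercivity keeps $(u_n)$ bounded, so along a subsequence $u_n\rightharpoonup v$, and $\langle Lu_n,u_n-v\rangle=\langle f_n,u_n-v\rangle\to0$, whence $(S_+)$ gives $u_n\to v$, continuity of $L$ gives $Lv=f$, and injectivity gives $v=u$; the limit being independent of the subsequence, $u_n\to u$. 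The step I expect to be the main obstacle is the $(S_+)$ property: passing rigorously from the vanishing of the monotonicity bracket to strong convergence of the gradients in the variable-exponent modular, which forces one to combine the uniform pointwise inequality available only for $p(x)\ge2$ with the modular--norm equivalences \eqref{imp0}--\eqref{imp2}.
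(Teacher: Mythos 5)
Your proof is essentially correct, but it is worth pointing out that the paper does not prove this lemma at all: it simply quotes it as \cite[Theorem 3.1]{Ch}, so what you have written is a self-contained argument where the paper relies on a citation. Your route is the standard one and all the key steps check out under the paper's standing hypotheses: the identity $(p(x)-1)p'(x)=p(x)$ makes $|\nabla u|^{p(x)-2}\nabla u$ land in $(L^{p'(x)}(\Omega))^N$, so boundedness follows from \eqref{hol} together with \eqref{e1}--\eqref{ineq1}; continuity via the Nemytskii operator (a.e.\ subsequence plus Vitali, then the subsequence principle) is legitimate because the integrand has exactly the growth $|\xi|^{p(x)-1}$; strict monotonicity follows from the strict convexity of $\xi\mapsto\frac{1}{p(x)}|\xi|^{p(x)}$; and the surjectivity-plus-$(S_+)$ argument for the inverse is the same scheme the paper itself uses later for $T=L^{-1}$ (Minty--Browder as in \cite{Z}). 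Two small remarks. First, your $(S_+)$ step hinges on the inequality $(|\xi|^{p-2}\xi-|\eta|^{p-2}\eta)\cdot(\xi-\eta)\ge c\,|\xi-\eta|^{p}$, which is available only for $p\ge2$; you correctly note this, and it is covered here because Section 3 assumes $2\le p^-$, but you should make explicit that the constant can be taken uniform in $x$ (e.g.\ $c=2^{2-p^+}$, using $p^+<\infty$), since a pointwise constant $c(p(x))$ would not immediately give the modular estimate. The cited reference proves the lemma for all $p^->1$ by using the companion inequality for the singular range $1<p<2$, so your argument is marginally less general but fully adequate for this paper. Second, in the continuity-of-$L^{-1}$ step, boundedness of $(u_n)$ should be justified as $\|u_n\|_{1,p(x)}^{p^--1}\le\|f_n\|_{-1,p'(x)}$ for $\|u_n\|_{1,p(x)}\ge1$, which is exactly the coercivity computation you sketch; once stated, the subsequence argument is complete.
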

\begin{lemma}\label{A}
  The operator $A:W_0^{1,p(x)}(\Omega)\rightarrow W^{-1,p'(x)}(\Omega)$ setting by
$$\langle Au,v\rangle=-\int_\Omega  uv\;dx \mbox{ for } u,v\in W_0^{1,p(x)}(\Omega)$$
is compact.
\end{lemma}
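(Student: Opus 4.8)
\emph{Proof strategy.} The plan is to realize $A$ as a composition that passes through the compact embedding $W_0^{1,p(x)}(\Omega)\hookrightarrow L^{p(x)}(\Omega)$. First I would note that $A$ is linear and that, by the H\"older inequality \eqref{hol}, $|\langle Au,v\rangle|\leq 2\|u\|_{p'(x)}\|v\|_{p(x)}$; since $p^-\geq 2$ we have $p'(x)\leq 2\leq p(x)$ a.e. on the bounded set $\Omega$, hence the continuous embedding $L^{p(x)}(\Omega)\hookrightarrow L^{p'(x)}(\Omega)$, and together with the Poincar\'e inequality \eqref{ptcar} this shows that $A$ is a bounded (hence continuous) linear operator from $W_0^{1,p(x)}(\Omega)$ into $W^{-1,p'(x)}(\Omega)$. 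In particular the upper-semicontinuity requirement in the definition of a compact operator is automatic, and it remains only to show that $A$ takes bounded sets to relatively compact sets.

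For that I would argue sequentially. Let $(u_n)$ be bounded in $W_0^{1,p(x)}(\Omega)$; by reflexivity a subsequence satisfies $u_n\rightharpoonup u$, the compact embedding $W_0^{1,p(x)}(\Omega)\hookrightarrow L^{p(x)}(\Omega)$ upgrades this to $u_n\to u$ in $L^{p(x)}(\Omega)$, and then $L^{p(x)}(\Omega)\hookrightarrow L^{p'(x)}(\Omega)$ gives $u_n\to u$ in $L^{p'(x)}(\Omega)$. To transfer this to the dual space, take any $v$ with $\|v\|_{1,p(x)}\leq 1$ and estimate, using \eqref{hol} and \eqref{ptcar},
$$|\langle Au_n-Au,v\rangle|=\left|\int_\Omega (u_n-u)v\,dx\right|\leq 2\|u_n-u\|_{p'(x)}\|v\|_{p(x)}\leq C\|u_n-u\|_{p'(x)},$$
with $C$ independent of $v$ and $n$; taking the supremum over such $v$ yields $\|Au_n-Au\|_{-1,p'(x)}\leq C\|u_n-u\|_{p'(x)}\to 0$. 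Thus every sequence in the image of a bounded set has a convergent subsequence, which is exactly relative compactness, and combined with continuity this proves $A$ is compact. Equivalently, one may record the factorization $A=-\,j^{*}\circ E\circ j$, with $j:W_0^{1,p(x)}(\Omega)\hookrightarrow L^{p(x)}(\Omega)$ the compact embedding, $E:L^{p(x)}(\Omega)\hookrightarrow L^{p'(x)}(\Omega)$ the continuous embedding, and $j^{*}:L^{p'(x)}(\Omega)\to W^{-1,p'(x)}(\Omega)$ the adjoint of $j$, and invoke that any composition of a compact operator with bounded linear operators is compact.

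I do not expect a genuine obstacle here: the computation is routine once the two embeddings are in place. The only points that deserve an explicit line are (i) where the hypothesis $p^-\geq 2$ enters, namely to guarantee $L^{p(x)}(\Omega)\hookrightarrow L^{p'(x)}(\Omega)$ so that the pairing $-\int_\Omega uv\,dx$ actually defines an element of $W^{-1,p'(x)}(\Omega)$, and (ii) the passage from the pointwise integral estimate to the norm $\|\cdot\|_{-1,p'(x)}$, which is just the definition of the dual norm together with H\"older's inequality.
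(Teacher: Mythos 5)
Your proposal is correct and takes essentially the same route as the paper: both factor $A$ through the compact embedding $W_0^{1,p(x)}(\Omega)\hookrightarrow L^{p(x)}(\Omega)$ and the continuous embedding $L^{p(x)}(\Omega)\hookrightarrow L^{p'(x)}(\Omega)$, which is where the hypothesis $p^-\geq 2$ enters. The only difference is cosmetic: you verify the final transfer to $W^{-1,p'(x)}(\Omega)$ by a direct sequential argument with the dual-norm estimate, whereas the paper writes $A=I^*\circ i\circ I$ and simply invokes that the adjoint $I^*$ of the compact embedding is itself compact.
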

\begin{proof}
  Since $p(x)\geq 2$, we have $p'(x)\leq 2\leq p(x)$, then the embedding\\$i:L^{p(x)}\rightarrow L^{p'(x)}$ is continuous. Since the embedding $I:W_0^{1,p(x)}(\Omega)\rightarrow L^{p(x)}(\Omega)$ is compact, it is known that the adjoint operator $I^*:L^{p'}(\Omega)\rightarrow W^{-1,p'(x)}(\Omega)$ is also compact. Therefor, $A=I^*oioI$ is compact.
\end{proof}
\begin{lemma}\label{N}
  Under assumptions $(g_1)$ and $(g_2)$, the set-valued operator\\
$N:W_0^{1,p(x)}(\Omega)\rightarrow 2^{W^{-1,p'(x)}(\Omega)}$ setting by
  $$Nu=\{z\in W^{-1,p'(x)}(\Omega)| \exists w\in L^{p'(x)}(\Omega); \underline{g}(x,u(x))\leq w(x)\leq \overline{g}(x,u(x)) \mbox{ a.e. } x\in\Omega$$
   $$\mbox{ and } \langle z ,v\rangle=\int_\Omega wv dx,\;\;\ \forall v\in W_0^{1,p(x)}(\Omega)\}$$ is bounded, u.s.c and compact.
\end{lemma}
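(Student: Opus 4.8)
The strategy is to write $N$ as a composition, in the spirit of the proof of Lemma~\ref{A}. Let $I:W_0^{1,p(x)}(\Omega)\to L^{p(x)}(\Omega)$ be the (compact) canonical embedding and $I^{*}:L^{p'(x)}(\Omega)\to W^{-1,p'(x)}(\Omega)$ its adjoint, which is compact by Schauder's theorem; note that $\langle I^{*}w,v\rangle=\int_{\Omega}wv\,dx$ for all $v\in W_0^{1,p(x)}(\Omega)$. Introduce the ``filled'' superposition multifunction $\mathcal{G}:L^{p(x)}(\Omega)\to 2^{L^{p'(x)}(\Omega)}$ defined by $\mathcal{G}\phi=\{w\in L^{p'(x)}(\Omega):\underline{g}(x,\phi(x))\le w(x)\le\overline{g}(x,\phi(x))\ \text{a.e.}\}$. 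Then $Nu=I^{*}(\mathcal{G}(Iu))$, i.e.\ $N=I^{*}\circ\mathcal{G}\circ I$, so everything reduces to properties of $\mathcal{G}$.

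First I would check that $\mathcal{G}$ has nonempty values and sends bounded sets to bounded sets. By $(g_1)$ the functions $\underline{g}(\cdot,\phi(\cdot))$, $\overline{g}(\cdot,\phi(\cdot))$ are measurable; taking the $\liminf$/$\limsup$ as $\eta\to s$ in $(g_2)$ shows $|\underline g(x,s)|,|\overline g(x,s)|\le k(x)+c|s|^{q(x)-1}$, so every $w\in\mathcal{G}\phi$ obeys $|w(x)|\le k(x)+c|\phi(x)|^{q(x)-1}$ a.e. Since $q^{+}<p^{-}$, the exponent $(q(x)-1)p'(x)$ is bounded above by a constant strictly smaller than $p^{-}$, so $|\phi|^{q(x)-1}\in L^{p'(x)}(\Omega)$ with $\|\,|\phi|^{q(x)-1}\,\|_{p'(x)}$ dominated by a function of $\|\phi\|_{p(x)}$ alone (via the embedding of $L^{p(x)}(\Omega)$ into a suitable constant-exponent Lebesgue space together with the modular relations \eqref{e1} and \eqref{ineq1}); with $k\in L^{p'(x)}(\Omega)$ this bounds $\|w\|_{p'(x)}$ in terms of $\|\phi\|_{p(x)}$. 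In particular $w=\underline g(\cdot,\phi(\cdot))\in\mathcal{G}\phi\neq\emptyset$, and for any bounded $B\subset W_0^{1,p(x)}(\Omega)$ the set $I(B)$ is bounded in $L^{p(x)}(\Omega)$, hence $\mathcal{G}(I(B))$ is bounded in $L^{p'(x)}(\Omega)$, hence $N(B)=I^{*}(\mathcal{G}(I(B)))$ is bounded in $W^{-1,p'(x)}(\Omega)$ and, $I^{*}$ being compact, even relatively compact. This gives the \emph{boundedness} of $N$ and the range-compactness half of \emph{compactness}.

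It remains to establish upper semicontinuity; as $W_0^{1,p(x)}(\Omega)$ is metrizable and images of bounded sets are relatively compact, it suffices to verify the closed--graph condition: if $u_{n}\to u$ in $W_0^{1,p(x)}(\Omega)$, $z_{n}\in Nu_{n}$ and $z_{n}\to z$ in $W^{-1,p'(x)}(\Omega)$, then $z\in Nu$. (Indeed, for a closed set $C$ and $u_{n}\in N^{-1}(C)$ with $u_{n}\to u$, choosing $z_{n}\in Nu_{n}\cap C$ and a convergent subsequence $z_{n_k}\to z\in C$, the condition forces $z\in Nu\cap C$, so $u\in N^{-1}(C)$.) Write $z_{n}=I^{*}w_{n}$ with $w_{n}\in\mathcal{G}(u_{n})$. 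Passing to a subsequence, $u_{n}\to u$ in $L^{p(x)}(\Omega)$ and a.e.\ in $\Omega$; by the previous paragraph $(w_{n})$ is bounded in the reflexive space $L^{p'(x)}(\Omega)$, so along a further subsequence $w_{n}\rightharpoonup w$ in $L^{p'(x)}(\Omega)$. For every $v\in W_0^{1,p(x)}(\Omega)\subset L^{p(x)}(\Omega)$ we then obtain $\langle z,v\rangle=\lim_{n}\langle z_{n},v\rangle=\lim_{n}\int_{\Omega}w_{n}v\,dx=\int_{\Omega}wv\,dx$, that is $z=I^{*}w$; so only $w\in\mathcal{G}(u)$ remains to be shown.

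This pointwise two--sided inclusion for the weak limit $w$ is the heart of the matter, and the tool is Mazur's lemma: pick convex combinations $\widetilde w_{m}=\sum_{j\ge m}\lambda_{m,j}w_{j}$ (finitely many $\lambda_{m,j}\ge 0$, $\sum_{j}\lambda_{m,j}=1$) with $\widetilde w_{m}\to w$ strongly in $L^{p'(x)}(\Omega)$, hence, along a subsequence, $\widetilde w_{m}\to w$ a.e. Fix $x$ in the set of full measure where both $u_{n}(x)\to u(x)$ and $\widetilde w_{m}(x)\to w(x)$. Given $\varepsilon>0$, the upper semicontinuity of $\overline{g}(x,\cdot)$ and the lower semicontinuity of $\underline{g}(x,\cdot)$ (Lemma~\ref{usc}) give, for all sufficiently large $j$, $\underline g(x,u(x))-\varepsilon\le\underline g(x,u_{j}(x))\le w_{j}(x)\le\overline g(x,u_{j}(x))\le\overline g(x,u(x))+\varepsilon$; since $\widetilde w_{m}$ uses only indices $j\ge m$, for $m$ large $\widetilde w_{m}(x)$ is a convex combination of numbers lying in $[\underline g(x,u(x))-\varepsilon,\,\overline g(x,u(x))+\varepsilon]$, hence lies in that interval. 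Letting $m\to\infty$ and then $\varepsilon\to 0$ yields $\underline g(x,u(x))\le w(x)\le\overline g(x,u(x))$ for a.e.\ $x$, i.e.\ $w\in\mathcal{G}(u)$ and $z=I^{*}w\in Nu$. Combining this closed--graph property with the relative compactness of $N$ on bounded sets shows that $N$ is u.s.c.\ and compact, and its boundedness was already established. The only genuine obstacle is this Mazur-lemma step, which converts weak $L^{p'(x)}$-convergence of the selections $w_{n}$ into the required pointwise estimate against the semicontinuous envelopes $\underline g(x,\cdot)$ and $\overline g(x,\cdot)$.
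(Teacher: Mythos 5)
Your proposal is correct, and it follows the paper's decomposition $N=I^{*}\circ\phi\circ I$ (your $\mathcal{G}$ is the paper's $\phi$) with essentially the same boundedness estimate via $(g_2)$, $r(x)=(q(x)-1)p'(x)<p(x)$ and the modular inequalities; but for upper semicontinuity you take a genuinely different route. The paper proves directly that the multimap $\phi:L^{p(x)}(\Omega)\to 2^{L^{p'(x)}(\Omega)}$ is u.s.c.\ in the norm topologies, by an $\varepsilon$--$\delta$, measure-theoretic argument in the spirit of Chang: the sets $E_{m,\varepsilon}$ built from the semicontinuity of the envelopes, absolute continuity of the integral of $|k|^{p'(x)}+|u_0|^{r(x)}$, and a careful splitting of $\Omega$ into $E_{m_0,\varepsilon}\setminus E$ and its complement, with modular-to-norm conversions at the end; u.s.c.\ and compactness of $N$ then follow by composing with $I$ and $I^{*}$. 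You instead prove u.s.c.\ only for the composite $N$, by combining the relative compactness of images of bounded sets (boundedness of $\phi$ plus compactness of $I^{*}$) with a closed-graph criterion, and you handle the key identification of the weak $L^{p'(x)}$-limit of the selections via Mazur's lemma together with the upper semicontinuity of $\overline g(x,\cdot)$ and lower semicontinuity of $\underline g(x,\cdot)$ --- which is exactly the semicontinuity the paper's own construction of $E_{m,\varepsilon}$ uses, even though Lemma~\ref{usc} is loosely stated. The trade-off: the paper's argument yields the stronger fact that $\phi$ itself is u.s.c.\ between the Lebesgue spaces, with no recourse to compact embeddings, while your argument is shorter, avoids the delicate measure estimates, and is tied to the compactness of $I^{*}$; since the main theorem only needs $N$ (hence $S=A+N$) to be bounded, u.s.c.\ and compact (hence quasimonotone), your version fully suffices for the paper's purposes.
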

\begin{proof}
Let $\phi:L^{p(x)}(\Omega)\rightarrow2^{L^{p'(x)}(\Omega)}$ be the set-valued operator given by
$$\phi u=\{w\in L^{p'(x)}(\Omega)|; \underline{g}(x,u(x))\leq w(x)\leq \overline{g}(x,u(x)) \mbox{ a.e. } x\in\Omega\}.$$
  For each $u\in W_0^{1,p(x)}(\Omega)$, we have from the growth condition $(g_2)$
  $$max\{|\underline{g}(x,s)|,|\overline{g}(x,s)|\}\leq k(x)+c|s|^{q(x)-1}.$$
and by the inequalities \eqref{e1} and \eqref{ineq1}, it follows that
\begin{eqnarray*}
  \|\underline{g}(x,u(x))\|_{p'(x)} &\leq& \rho_{p'(x)}(\underline{g}(x,u(x)))+1 \\
   &=& \int_\Omega|\underline{g}(x,u(x))|^{p'(x)}+1 \\
   &\leq& 2^{p'^+}(\rho_{p'(x)}(k)+\rho_{r(x)}(u)+1 \\
   &\leq& 2^{p'^+}(\rho_{p'(x)}(k)+\|u\|_{r(x)}^{r^+}+\|u\|_{r(x)}^{r^-})+1,
\end{eqnarray*}
where $r(x)=(q(x)-1)p'(x)<p(x)$. By the continuous embedding $L^{p(x)}\hookrightarrow L^{r(x)}$, we have
$$\|\underline{g}(x,u(x))\|_{p'(x)}\leq 2^{p'^+}(\rho_{p'(x)}(k)+\|u\|_{1,p(x)}^{r^+}+\|u\|_{1,p(x)}^{r^-})+1.$$
A similar inequality holds for $\overline{g}(x,u(x))$, so that $\phi$ is bounded on $W_0^{1,p(x)}(\Omega)$.\\
Let's show that $\phi$ is u.s.c, i.e.,
 $$\forall\varepsilon>0, \exists\delta>0; \parallel u-u_0\parallel_{p(x)}<\delta\Rightarrow\phi u\subset\phi u_0+B_\varepsilon,$$
where $B_\varepsilon$ is the $\varepsilon$-ball in $L^{p'(x)}(\Omega)$.\\
To this end, given $u_0\in L^{p(x)}(\Omega)$, we consider the point sets
$$E_{m,\varepsilon}=\bigcap_{t\in \mathbb{R}}G_t$$
where
$$G_t=\{x\in \Omega; |t-u_0(x)|<\frac{1}{m}\Rightarrow[\underline{g}(x,t),\overline{g}(x,t)]\subset(\underline{g}(x,u_0(x)-\frac{\varepsilon}{R},\overline{g}(x,u_0(x))+
\frac{\varepsilon}{R});$$
$m$ being an integer and $R$ being a constant to be determined.\\
It is obvious that
$$E_{1,\varepsilon}\subset E_{2,\varepsilon}\subset...$$
By Lemma \ref{usc},
$$\bigcup_{m=1}^\infty E_{m,\varepsilon}=\Omega,$$
thus there is an integer $m_0$ such that
\begin{equation}\label{1.5}
  m(E_{m_0,\varepsilon})>m(\Omega)-\frac{\varepsilon}{R}
\end{equation}
But for all $\varepsilon>0$, there is $\eta=\eta(\varepsilon)>0$, such that $m(T)<\eta$ implies
\begin{equation}\label{1.6}
  2^{p'^+}\int_T 2|k(x)|^{p'(x)}+c'(1+2^{r^+})|u_0(x)|^{r(x)}<\frac{\varepsilon'}{3},
\end{equation}
due to $k\in L^{p'(x)}(\Omega)$ and $u_0\in L^{r(x)}(\Omega)$, where $r(x)=(q(x)-1)p'(x),$\\
$c'=\max\{c^{p'^+},c^{p'^-}\}$ and $\varepsilon'=\inf\{\varepsilon^{p'^-},\varepsilon^{p'^+}\}.$
Now let
\begin{equation}\label{1.8}
  0<\delta<\min\{1,\frac{1}{m_0}(\frac{\eta}{2})^{\frac{1}{p^-}},(\frac{\varepsilon'}{3c'2^{p'^++r^+}})^{\frac{1}{r^-}}\},
\end{equation}
\begin{equation}\label{1.9}
  R>\max\{\varepsilon,\frac{2\varepsilon}{\eta},(\frac{3m(\Omega)}{\varepsilon'})^{\frac{1}{p'^-}}\varepsilon\}.
\end{equation}
Suppose that $\|u-u_0\|_{p(x)}<\delta$, and consider the set $E =\{x\in \Omega; |u(x)-u_0(x)|\geq\frac{1}{m_0}\}$; we have
\begin{equation}\label{1.10}
  m(E)<(m_0\delta)^{p^-}<\frac{\eta}{2}
\end{equation}
If $x\in E_{m_0,\varepsilon}\setminus E$, then, for each $w\in \phi u$,
$$|u(x)-u_0(x)|<\frac{1}{m_0}$$
and
$$w(x)\in (\underline{g}(x,u_0(x)-\frac{\varepsilon}{R},\overline{g}(x,u_0(x))+\frac{\varepsilon}{R}).$$
Let
\begin{eqnarray*}
  G^+ &=& \{x\in \Omega; w(x)>\overline{g}(x,u_0(x))\}, \\
  G^- &=&\{x\in \Omega; w(x)<\underline{g}(x,u_0(x))\}, \\
  G^0 &=& \{x\in \Omega; w(x)\in [\underline{g}(x,u_0(x),\overline{g}(x,u_0(x))].
\end{eqnarray*}
and
\begin{eqnarray*}
  y(x) &=& \left\{
             \begin{array}{ll}
               \overline{g}(x,u_0(x)), & \hbox{for } x\in G^+ ; \\
               w(x), & \hbox{for } x\in G^0; \\
               \underline{g}(x,u_0(x), & \hbox{for } x\in G^-.
             \end{array}
           \right.
\end{eqnarray*}
Then $y\in \phi u_0$ and
\begin{equation}\label{1.11}
  |y(x)-w(x)|<\frac{\varepsilon}{R} \mbox{ for all } x\in E_{m_0,\varepsilon}\setminus E.
\end{equation}
Combining \eqref{1.9} with \eqref{1.11}, we obtain
\begin{equation}\label{1.12}
  \int_{E_{m_0,\varepsilon}\setminus E}|y(x)-w(x)|^{p'(x)}\;dx<(\frac{\varepsilon}{R})^{p'^-}m(\Omega)<\frac{\varepsilon'}{3}.
\end{equation}
Let $V$ be the coset in $\Omega$ of $E_{m_0,\varepsilon}\setminus E$; then $V =(\Omega\setminus E_{m_0,\varepsilon})\cup(E_{m_0,\varepsilon}\cap E)$ and
$$m(V)\leq m(\Omega\setminus E_{m_0,\varepsilon})+m(E_{m_0,\varepsilon}\cap E)<\frac{\varepsilon}{R}+m(E)<\eta,$$
in view of \eqref{1.5}, \eqref{1.10} and \eqref{1.9}. Combining $(g_2)$ and \eqref{1.6} with \eqref{1.8}, we obtain
\begin{eqnarray*}
  \int_V|y(x)-w(x)|^{p'(x)}\;dx &\leq& \int_V(|y(x)|^{p'(x)}+|w(x)|^{p'(x)})\;dx \\
   &\leq& 2^{p'^+}\int_V(|b(x)|^{p'(x)}+c^{p'(x)}|u_0(x)|^{r(x)} \\
   &+&|b(x)|^{p'(x)}+c^{p'(x)}+|u(x)|^{r(x)})\;dx \\
   &\leq& 2^{p'^+}\int_V(2|b(x)|^{p'(x)}+c^{p'(x)}|u_0(x)|^{r(x)} \\
   &+&c^{p'(x)}+2^{r(x)}(|u(x)-u_0(x)|^{r(x)}+|u_0(x)|^{r(x)}))\;dx \\
   &\leq& 2^{p'^+}\int_V(2|b(x)|^{p'(x)}+c'(1+2^{r(x)})|u_0(x)|^{r(x)})\;dx \\
   &+& 2^{p'^+}c'\int_\Omega 2^{r(x)}|u(x)-u_0(x)|^{r(x)}\;dx \\
  &\leq& \frac{\varepsilon'}{3}+\frac{\varepsilon'}{3},
\end{eqnarray*}
Thus
\begin{equation}\label{1.13}
  \int_V|y(x)-w(x)|^{p'(x)}\;dx\leq 2\frac{\varepsilon'}{3}.
\end{equation}
Combining \eqref{1.12} with \eqref{1.13}, we see that $$\rho_{p'(x)}(y-w)<\varepsilon'.$$
- If $\varepsilon\geq1$, then $\varepsilon'=\varepsilon^{p'^-}$. From \eqref{imp1} and \eqref{imp2}, we have
$$\|y-w\|_{p'(x)}^{p'^-}<\varepsilon^{p'^-} \mbox{ or } \|y-w\|_{p'(x)}^{p'^+}<\varepsilon^{p'^-},$$
then $$\|y-w\|_{p'(x)}<\varepsilon \mbox{ or } \|y-w\|_{p'(x)}<\varepsilon^{\frac{p'^-}{p'^+}}\leq\varepsilon.$$
- If $\varepsilon<1$, then $\varepsilon'=\varepsilon^{p'^+}$. From \eqref{imp0} and \eqref{imp2}, we have
$\|y-w\|_{p'(x)}^{p'^+}<\varepsilon^{p'^+},$
then $$\|y-w\|_{p'(x)}<\varepsilon.$$
Therefore  $\phi$ is u.s.c.\\
Hence $I^*o\phi oI$ is obviously bounded, u.s.c and compact.
\end{proof}

\section{Main result}
\begin{definition}
  We call that $u\in W_0^{1,p(x)}(\Omega)$ is a weak solution of (\ref{Pr1}) if there exists $z\in Nu$ such that
  $$\int_\Omega|\nabla u|^{p(x)-2}\nabla u\nabla v\;dx+\int_\Omega uv\;dx +\langle z,v\rangle= 0, \;\;\forall v\in W_0^{1,p(x)}(\Omega).$$
\end{definition}
\begin{theorem}
  Under assumptions $(g_1)$ and $(g_2)$, the problem (\ref{Pr1}) has a weak solution $u$ in $W_0^{1,p(x)}(\Omega).$
\end{theorem}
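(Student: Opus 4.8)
The plan is to recast \eqref{Pr1} as a Hammerstein-type inclusion in $X:=W_0^{1,p(x)}(\Omega)$ and then apply the degree of Theorem \ref{t2.1}. Write $X^*:=W^{-1,p'(x)}(\Omega)$ and let $L,A,N$ be the operators of Lemmas \ref{L}, \ref{A} and \ref{N}. The definition of weak solution says precisely that $u$ solves \eqref{Pr1} iff $Lu-Au+z=0$ in $X^*$ for some $z\in Nu$, i.e. iff $0\in Lu+\Phi u$ with $\Phi u:=Nu-Au\subset X^*$. Since, by Lemma \ref{L}, $L$ is a homeomorphism of class $(S_+)$ with bounded continuous inverse and is odd ($L(-u)=-Lu$), applying $L^{-1}$ rewrites this as $0\in u+L^{-1}\Phi u$. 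Setting $T:=L$ and $S:=L^{-1}\circ\Phi\circ L^{-1}:X^*\to 2^X$ and using $L^{-1}\Phi u=S(Tu)$, I reduce the problem to finding $u$ with $0\in(I+ST)u$, which is the Hammerstein form.

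Next I would verify admissibility for Lemma \ref{l2.1}. By Lemmas \ref{A} and \ref{N}, $\Phi=N-A$ is bounded, u.s.c.\ and compact, so $S=L^{-1}\Phi L^{-1}$ is locally bounded, w.u.s.c.\ and compact; in particular $S$ is quasimonotone (a compact set-valued operator satisfies $\liminf\langle s_n,w_n-w\rangle\ge 0$ along weakly convergent sequences). Hence, for every bounded open $G\subset X$, Lemma \ref{l2.1}(1) gives $I+ST\in\mathcal F_T(\bar G)$. Moreover the affine homotopy $H(t,\cdot):=(1-t)I+t(I+ST)$, i.e.\ $H(t,u)=u+tS(Tu)$, is a locally bounded, w.u.s.c.\ homotopy of class $(S_+)_T$ with common essential inner map $T$ (by the definition of affine homotopy and the remark following Lemma \ref{l2.1}), so it is admissible in Theorem \ref{t2.1}(3); and $d(I,G,0)=1$ by Theorem \ref{t2.1}(4) when $0\in G$.

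The crux is an a priori bound: I must produce $R>0$ with $0\notin H(t,u)$ whenever $\|u\|_{1,p(x)}=R$ and $t\in[0,1]$. For $t=0$ this is clear since $H(0,u)=\{u\}$. For $t\in(0,1]$, $0\in H(t,u)$ unwinds to $z-Au=L(-u/t)$ for some $z\in Nu$; pairing with $u$ and using $\langle L(-u/t),u\rangle=-\int_\Omega t^{-(p(x)-1)}|\nabla u|^{p(x)}\,dx$, $\langle Au,u\rangle=-\int_\Omega u^2\,dx$, $\langle z,u\rangle=\int_\Omega wu\,dx$ with $|w(x)|\le k(x)+c|u(x)|^{q(x)-1}$, and the fact that $t^{-(p(x)-1)}\ge 1$, I obtain
\[
\int_\Omega|\nabla u|^{p(x)}\,dx\le\int_\Omega|w||u|\,dx\le\int_\Omega k|u|\,dx+c\int_\Omega|u|^{q(x)}\,dx .
\]
Then the Hölder inequality \eqref{hol}, the Poincaré inequality \eqref{ptcar}, the continuous embedding $L^{p(x)}(\Omega)\hookrightarrow L^{q(x)}(\Omega)$ (available since $q^+<p^-$), and the modular estimates \eqref{imp1}, \eqref{ineq1} bound the right-hand side by $c_1\|u\|_{1,p(x)}+c_2(\|u\|_{1,p(x)}^{q^-}+\|u\|_{1,p(x)}^{q^+})$, while the left-hand side is $\ge\|u\|_{1,p(x)}^{p^-}$ once $\|u\|_{1,p(x)}>1$; since $q^+<p^-$, this forces $\|u\|_{1,p(x)}\le R_0$ with $R_0$ independent of $t$. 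I then take $R:=\max\{1,R_0\}+1$ and $G:=\{u:\|u\|_{1,p(x)}<R\}$.

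Finally, with this $G$, Theorem \ref{t2.1}(3) gives $d(I+ST,G,0)=d(H(1,\cdot),G,0)=d(H(0,\cdot),G,0)=d(I,G,0)=1\neq 0$, so by the existence property (Theorem \ref{t2.1}(1)) the inclusion $0\in(I+ST)u$ has a solution $u\in G$, which by the first paragraph is a weak solution of \eqref{Pr1}. I expect the a priori estimate to be the only genuine obstacle: it is where the subcriticality $q^+<p^-$ must be combined with Poincaré and the variable-exponent modular and embedding inequalities to beat the growth term uniformly in $t$; the reduction to Hammerstein form and the verification that $S$ is quasimonotone and w.u.s.c.\ are routine consequences of Lemmas \ref{A}, \ref{L} and \ref{N}.
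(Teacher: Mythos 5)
Your overall strategy (Hammerstein reduction plus the degree of \cite{K}, compactness of the lower-order multivalued part giving quasimonotonicity, an a priori bound exploiting $q^+<p^-$, and an affine homotopy to the identity) is the same as the paper's, and your a priori estimate is correct --- in fact cleaner than the paper's, since you discard $\int_\Omega u^2\,dx$ using its favourable sign instead of carrying a $\|u\|_{L^2}^2$ term; you also treat the sign of $A$ consistently by writing the weak formulation as $0\in Lu+(N-A)u$.

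However, the particular decomposition you chose creates a genuine gap. You place the inversion of $L$ on the outside, taking $T:=L$ and $S:=L^{-1}\circ(N-A)\circ L^{-1}$, so the values of $S(Tu)=L^{-1}\bigl((N-A)u\bigr)$ are images of the convex sets $(N-A)u$ under the nonlinear homeomorphism $L^{-1}$; these values are in general not convex, and neither are the values of $I+ST$ or of your homotopy $H(t,\cdot)$. The degree quoted in Lemma \ref{l2.1} and Theorem \ref{t2.1} is built on the Berkovits--Tienari degree and is established for locally bounded w.u.s.c.\ multis with nonempty closed \emph{convex} values; the paper's summary omits this hypothesis, but it cannot be dropped: already in $\mathbb{R}^n$ (where the $(S_+)_T$ condition is vacuous) a degree with the existence, homotopy-invariance and normalization properties fails for u.s.c.\ compact-valued maps without convexity --- the standard non-convex counterexamples to Kakutani's theorem yield admissible affine homotopies violating the axioms. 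So Lemma \ref{l2.1} and the homotopy invariance cannot be invoked for your $S$ as written. The paper's arrangement is designed to avoid exactly this: it sets $v:=Lu$, $T:=L^{-1}$ (bounded, continuous and of class $(S_+)$ via Minty--Browder), $S:=A+N$, and solves $0\in (I+ST)v$ in $W^{-1,p'(x)}(\Omega)$, where $STv=S(Tv)$ inherits the convex values of $S$. Your a priori estimate transfers verbatim to that setting (it is an estimate on $u=Tv$, and the boundedness of $S$ then bounds $v$), so your proof is repaired simply by performing the reduction on the other side. Note also that the admissibility of the endpoint $I$ of the homotopy needs a word in either arrangement (in the paper, $I=LT\in\mathcal{F}_T$ by Lemma \ref{l2.1}(2); in your setting it would require $L^{-1}$ to be of class $(S_+)$), a point you passed over.
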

\begin{proof}
Let $L, A :W_0^{1,p(x)}(\Omega)\rightarrow W^{-1,p'(x)}(\Omega)$ and $N:W_0^{1,p(x)}(\Omega)\rightarrow 2^{W^{-1,p'(x)}(\Omega)}$ be definded in Lemmas \ref{L}, \ref{A} and \ref{N}, respectively. Then $u\in W_0^{1,p(x)}(\Omega)$ is a weak solution of \eqref{Pr1} if and only if
  \begin{equation}\label{eq2}
    Lu\in-(A+N)u.
  \end{equation}
Thanks to the properties of the operator $L$ seen in Lemma \ref{L} and in view of Minty-Browder Theorem (see \cite{Z}, Theorem 26A), the inverse operator \\$T:=L^{-1}:W^{-1,p'(x)}(\Omega)\rightarrow W_0^{1,p(x)}(\Omega)$ is bounded, continuous and satisfies condition $(S_+)$. Moreover, note by Lemmas \ref{A} and \ref{N} that the operator\\
$S:=A+N:W_0^{1,p(x)}(\Omega)\rightarrow 2^{W^{-1,p'(x)}(\Omega)}$ is bounded, u.s.c and quasimonotone. Consequently, equation (\ref{eq2}) is equivalent to
\begin{equation}\label{eq3}
  u=Tv \mbox{ and } v\in -STv.
\end{equation}
To solve equation (\ref{eq3}), we will apply the degree theory introducing in section 2. To do this, we first claim that the set
$$B:=\{v\in W^{-1,p'(x)}(\Omega)|v\in -tSTv \mbox{ for some } t\in[0,1]\}$$ is bounded. Indeed, let $v\in B$, that is $v+ta=0$ for some $t\in[0,1]$ where $a\in STv.$ Set $u:=Tv$, then $|Tv|_{1,p(x)}=|\nabla u|_{p(x)}.$ We write $a=Au+z\in Su,$ where $z\in Nu,$ that is $\langle z,u\rangle=\int_\Omega wu\;dx,$ for some $w\in L^{p'(x)}(\Omega)$ with $\underline{g}(x,u(x))\leq w(x)\leq\overline{g}(x,u(x))$ for a.e. $x\in\Omega.$\\
If $\|\nabla u\|_{p(x)}\leq 1$, then $\|Tv\|_{1,p(x)}$ is bounded.\\
If $\|\nabla u\|_{p(x)}>1$, then we get by the implication \eqref{imp1}, the growth condition $(g_2)$, the H\"older inequality \eqref{hol} and the inequality \eqref{ineq1} the estimate
\begin{eqnarray*}
  \|Tv\|_{1,p(x)}^{p^-} &=& \|\nabla u\|_{p(x)}^{p-} \\
   &\leq& \rho_{p(x)}(\nabla u) \\
   &=& \langle Lu,u\rangle  \\
  &=& \langle v,Tv\rangle \\
   &=& -t\langle a,Tv \rangle  \\
   &=& -t\int_\Omega (u+w)u\; dx \\
  &\leq& const(+\int_\Omega|u|^2\;dx\int_\Omega|k(x)u(x)|\;dx+\rho_{q(x)}(u)) \\
  &\leq& const(\|u\|_{L^2}^2+\|k\|_{p'(x)}\|u\|_{p(x)}+\|u\|_{q(x)}^{q^+}+\|u\|_{q(x)}^{q^-})\\
  &\leq& const(\|u\|_{L^2}^2+\|u\|_{p(x)}+\|u\|_{q(x)}^{q^+}+\|u\|_{q(x)}^{q^-}).
\end{eqnarray*}
From the Poincar\'e inequality (\ref{ptcar}) and the continuous embedding $L^{p(x)}(\Omega)\hookrightarrow L^2(\Omega)$ $L^{p(x)}(\Omega)\hookrightarrow L^{q(x)}(\Omega)$, we can deduct the estimate
$$\|Tv\|_{1,p(x)}^{p^-} \leq const(\|Tv\|_{1,p(x)}^2+\|Tv\|_{1,p(x)}+\|Tv\|_{1,p(x)}^{q^+}).$$
It follows that $\{Tv|v\in B\}$ is bounded.\\
Since the operator $S$ is bounded, it is obvious from \eqref{eq3} that the set $B$ is bounded in $W^{-1,p'(x)}(\Omega)$. Consequently, there exists $R>0$ such that
$$\|v\|_{-1,p'(x)}<R \mbox{ for all } v\in B.$$
This says that
$$v\notin -tSTv, \mbox{ for all } v\in\partial B_R(0) \mbox{ and all } t\in[0,1].$$
From Lemma (\ref{l2.1}) it follows that
$$I+ST\in\mathcal{F}_T(\overline{B_R(0)})\mbox{ and } I=LT\in\mathcal{F}_T(\overline{B_R(0)}).$$
Consider a homotopy $H:[0,1]\times\overline{B_R(0)}\rightarrow 2^{W^{-1,p'(x)}(\Omega)}$ given by
$$H(t,v):=(1-t)Iv+t(I+ST)v \mbox{ for } (t,v)\in[0,1]\times\overline{B_R(0)}.$$
Applying the homotopy invariance and normalization property of the degree $d$ stated in Theorem(\ref{t2.1}), we get
$$d(I+ST,B_R(0),0)=d(I,B_R(0),0)=1,$$
and hence there exists a point $v\in B_R(0)$ such that
$$v\in -STv,$$
which says that $u=Tv$ is a solution of \eqref{eq2}. We conclude that $u=Tv$ is a weak solution of \eqref{Pr1}. This completes the proof.
\end{proof}


\end{document}